\renewcommand{\phi}{\varphi}
\newcommand{\tr}{\operatorname{tr}}
\newcommand{\C}{\mathbb{C}}
\newcommand{\M}{{\bf M}}
\newcommand{\SU}{{\bf SU}}
\renewcommand{\O}{{\bf O}}
\newcommand{\R}{\mathbb{R}}
\newcommand{\norm}[1]{\left\| #1 \right\|}
\newcommand{\inner}[1]{\langle #1 \rangle}
\newcommand{\diag}{\operatorname{diag}}
\newcommand{\megamatrix}[9]{\begin{pmatrix} #1 & #2 & #3 \\ #4 & #5 & #6 \\ #7 & #8 & #9\end{pmatrix}  }
\newcommand{\ultramatrix}[6]{\begin{pmatrix} 0 & #1 & #2 & #3 \\ 0 & 0 & #4 & #5 \\ 0 & 0 & 0 & #6 \\ 0 & 0 & 0 & 0\end{pmatrix}  }
\newtheorem{Corollary}{Corollary}
\newtheorem{Theorem}{Theorem}
\newtheorem{Lemma}{Lemma}
\theoremstyle{definition}
\newtheorem{Example}{Example}
\begin{document}
\title[Unitary equivalence to a complex symmetric matrix]{Unitary equivalence to a complex symmetric matrix:  low dimensions}
\author[S.R.~Garcia]{Stephan Ramon Garcia}

\author[D.~Poore]{Daniel E. Poore}

	\address{Department of Mathematics\\
	Pomona College\\
	610 North College Avenue\\
	Claremont, California\\
	91711}
	\email{Stephan.Garcia@pomona.edu}
	\urladdr{http://pages.pomona.edu/\textasciitilde sg064747}

	\author[J.E.~Tener]{James E. Tener}
	\address{   Department of Mathematics\\
		University of California\\Berkeley, CA\\ 94721}
	\email{jtener@math.berkeley.edu}
	\urladdr{http://math.berkeley.edu/\textasciitilde jtener}

	\thanks{The first and second authors were partially supported by NSF Grant DMS-1001614.  The third author
	was partially supported by a NSF Graduate Research Fellowship.}

	\keywords{Complex symmetric matrix, complex symmetric operator, unitary equivalence, unitary similarity, unitary orbit, transpose, trace,
	nilpotent, truncated Toeplitz operator, UECSM, words, $\SU(p,q)$.}
	\subjclass[2000]{15A57, 47A30}
	
	\begin{abstract}
		A matrix $T \in \M_n(\C)$ is \emph{UECSM} if it is 
		unitarily equivalent to a complex symmetric (i.e., self-transpose) matrix.
		We develop several techniques for studying this property in dimensions three and four.
		Among other things, we completely characterize $4 \times 4$ nilpotent matrices which
		are UECSM and we settle an open problem which has lingered in the $3 \times 3$ case.
		We conclude with a discussion concerning a crucial difference which makes dimension
		three so different from dimensions four and above	
	\end{abstract}

\bibliographystyle{plain}


\maketitle

\section{Introduction}

	Following \cite{Tener}, we say that a matrix $T \in \M_n(\C)$ is \emph{UECSM} if it is 
	unitarily equivalent to a complex symmetric (i.e., self-transpose) matrix.  
	Here we use the term \emph{unitarily equivalent} in the sense of operator theory:
	we say that two matrices $A$ and $B$ are unitarily equivalent if $A = UBU^*$ for some unitary matrix $U$.
	We denote this relationship by $A \cong B$.
	In contrast, the term \emph{unitarily similar} is frequently used in the matrix-theory literature.
	
	Since every square complex matrix is \emph{similar} to
	a complex symmetric matrix \cite[Thm.~4.4.9]{HJ} (see also \cite[Ex.~4]{CSOA} and \cite[Thm.~2.3]{CCO}),
	determining whether a given matrix is UECSM is sometimes difficult, although
	several numerical methods \cite{UECSMGC, UECSMMC, Tener} have recently emerged.
	To illustrate the subtlety of this problem, we remark that exactly one of the following matrices is UECSM (see
	Section \ref{SectionN4})
	\begin{equation}\label{eq-Stump}
		\ultramatrix{2}{9}{1}{0}{4}{7}\quad
		\ultramatrix{2}{9}{1}{0}{5}{7}\quad
		\ultramatrix{2}{9}{1}{0}{6}{7}\quad
		\ultramatrix{2}{9}{1}{0}{7}{7}.
	\end{equation}

	Let us briefly discuss our results.
	First, we adapt, from the three-dimensional to the four-dimensional setting,
	a highly successful method developed in \cite{ATTO} based upon the Pearcy-Sibirski{\u\i} trace
	criteria \cite{Pearcy, Sibirskii} (Section \ref{SectionTrace}).  This work depends crucially
	upon a recent breakthrough of Djokivi\'{c} \cite{Djokovic} in the study of Poincar\'e series.
	As a concrete example, we use our new criteria to completely characterize $4 \times 4$ nilpotent matrices which
	are UECSM (Section \ref{SectionN4}).  Following a somewhat different thread, we settle in the affirmative 
	a conjecture which has lingered in the $3 \times 3$ case for the last few years (Section \ref{SectionWAT}).
	Moreover, we also provide a theoretical explanation for the failure of this conjecture in dimensions four and above
	(Section \ref{SectionInsuffWAT}).  In particular, we are able to construct the counterexample \cite[Ex.~5]{UECSMGC}
	from scratch, as opposed to resorting to a brute-force random search.  	
	We conclude this note with a discussion concerning a crucial difference which makes dimension
	three so different from dimensions four and above (Section \ref{Section34}).
	\medskip
	
	\noindent\textbf{Acknowledgments}:  The first author wishes to thank Bernd Sturmfels
	for a recent helpful discussion about symbolic computation with complex variables.  The 
	authors would also like to thank the anonymous referee for a careful review of this paper and for
	suggesting a way to simplify the proof of Theorem \ref{TheoremWAT}.

\section{Trace Criteria}\label{SectionTrace}

	In 1968, Sibirski{\u\i} \cite{Sibirskii} refined a striking result of Pearcy \cite{Pearcy}
	and proved that $A,B \in \M_3(\C)$ are unitarily equivalent if and only if $\Phi(A) = \Phi(B)$ 
	where $\Phi:\M_3(\C)\to \C^7$ is the function defined by
	\begin{equation}\label{eq-Words}
		\Phi(X) = (\tr X, \, \tr  X^2,\, \tr X^3,\, \tr X^* X,\, \tr X^*X^2, \, \tr {X^*}^2 X^2, \, \tr X^* X^2 {X^*}^2X).
	\end{equation}
	Pearcy's original 1962 result included the words $X^*XX^*X$ and $X^*X^2X^*X$,
	which were later shown by Sibirksi{\u\i} to be redundant.  
	
	Recently, the first and third authors proved that for $n \leq 7$, a matrix 
	$T \in \M_n(\C)$ is UECSM if and only if $T \cong T^t$ and, moreover, that this result fails for $n \geq 8$ \cite{UET}.
	Consequently, $T \in \M_3(\C)$ is UECSM if and only if
	$\Phi(T) = \Phi(T^t)$.
	Fortunately, the first six traces in \eqref{eq-Words} are automatically
	equal for $X = T$ and $X = T^t$, whence $T$ is UECSM 
	if and only if $\tr X^* X^2 {X^*}^2X$ yields the same value for $X = T$ and $X = T^t$.
	Using standard properties of the trace, one sees that this is equivalent to 
	\begin{equation}\label{eq-TraceTest}
		\tr [T^*T(T^*T - TT^*)TT^*] = 0.
	\end{equation}
	In other words, $T \in \M_3(\C)$ is UECSM if and only if \eqref{eq-TraceTest} holds.	

	A simple extension of the Pearcy-Sibirski{\u\i} theorem to the $4 \times 4$ setting appeared hopeless for many years 
	until Djokovi\'c \cite[Thm.~4.4]{Djokovic} recently proved that $A,B \in \M_4(\C)$
	are unitarily equivalent if and only if $\tr w_i(A,A^*) = \tr w_i(B,B^*)$
	for $i = 1,2,\ldots,20$, where the words $w_i(x,y)$ are defined by
	\begin{multicols}{4}
		\begin{enumerate}\addtolength{\itemsep}{0.25\baselineskip}
			\item $x$
			\item $x^2$
			\item $xy$
			\item $x^3$
			\item $x^2 y$
			\columnbreak
			\item $x^4$
			\item $x^3 y $
			\item $x^2 y^2$
			\item $xyxy$
			\item $x^3 y^2$
			\columnbreak
			\item $x^2yx^2y$
			\item $x^2 y^2 xy$
			\item $y^2 x^2 y x$
			\item $x^3 y^2 xy$
			\item $x^3 y^2 x^2 y$
			\columnbreak
			\item $x^3 y^3 xy$
			\item $y^3 x^3 y x$
			\item $x^3 y x^2 yxy$
			\item $x^2 y^2 x y x^2 y$
			\item $x^3 y^3 x^2 y^2$.
		\end{enumerate}
	\end{multicols}
	In light of the fact that $T \in \M_4(\C)$ is UECSM if and only if $T \cong T^t$, it follows that
	$T$ is UECSM if and only if $\tr w_i(T,T^*) = \tr w_i(T^t,\overline{T})$
	for $i = 1,2,\ldots,20$.  Since a matrix and its transpose have the same trace,
	the preceding is equivalent to
	\begin{equation}\label{eq-TraceTransposeReverse}
		\tr w_i(T,T^*) = \tr \widetilde{w}_i(T,T^*),
	\end{equation}
	where $\widetilde{w}_i(x,y)$ is the reverse of $w_i(x,y)$ (e.g., $\widetilde{xy^2} = y^2 x$).
	Fortunately, the desired condition \eqref{eq-TraceTransposeReverse} holds automatically for $i = 1,2,\ldots, 11$.
	For instance, 
	\begin{equation*}
		\tr \widetilde{w}_{11}(T,T^*)
		= \tr T^*T^2 T^*T^2 
		= \tr T^2 T^* T^2 T^* 
		= \tr w_{11}(T,T^*).
	\end{equation*}
	Thus $T$ is UECSM if and only if \eqref{eq-TraceTransposeReverse} holds for the nine values $i = 12,13,\ldots,20$.
	However, we can do even better for we claim that \eqref{eq-TraceTransposeReverse} holds for $i = 12$ if and only if
	\eqref{eq-TraceTransposeReverse} holds for $i = 13$:
	\begin{align*}
		\tr w_{12}(T,T^*) = \tr \widetilde{w}_{12}(T,T^*)
		&\quad \Leftrightarrow\quad \tr T^2 {T^*}^2 TT^* = \tr T^* T {T^*}^2 T^2 \\
		&\quad \Leftrightarrow\quad  \tr  TT^* T^2 {T^*}^2 = \tr {T^*}^2 T^2 T^* T\\
		&\quad \Leftrightarrow\quad  \tr \widetilde{w}_{13}(T,T^*) = \tr w_{13}(T,T^*) .
	\end{align*}
	Similarly, \eqref{eq-TraceTransposeReverse} holds for $i = 16$ if and only if \eqref{eq-TraceTransposeReverse} holds for $i = 17$:
	\begin{align*}
		\tr w_{16}(T,T^*) = \tr \widetilde{w}_{16}(T,T^*)
		&\quad \Leftrightarrow\quad \tr T^3 {T^*}^3 TT^* = \tr T^*T {T^*}^3 T^3\\
		&\quad \Leftrightarrow\quad  \tr TT^* T^3 {T^*}^3 = \tr {T^*}^3 T^3 T^*T  \\
		&\quad \Leftrightarrow\quad  \tr \widetilde{w}_{17}(T,T^*) = \tr w_{17}(T,T^*) .
	\end{align*}
	Thus we need only consider the indices $i = 12,14,15,16,18,19,20$.
	Now observe that for $i = 20$ the desired condition $\tr w_{20}(T,T^*) = \tr \widetilde{w}_{20}(T,T^*)$ is equivalent to
	\begin{align*}
		\tr ( T^3 {T^*}^3 T^2 {T^*}^2 - {T^*}^2 T^2 {T^*}^3 T^3) = 0
		&\,\, \Leftrightarrow\,\, \tr ( T^3 {T^*}^3 T^2 {T^*}^2 -  T^2 {T^*}^3 T^3{T^*}^2 ) = 0 \\
		&\,\, \Leftrightarrow\,\, \tr [T^2(T{T^*}^3 - {T^*}^3 T) T^2 {T^*}^2]=0.
	\end{align*}
	Similar computations for $i = 12,14,15,16,18,19$ yield the following theorem:

	\begin{Theorem}\label{TheoremTT4}
		A matrix $T \in {\bf M}_4(\C)$ is UECSM if and only if the traces of the following
		seven matrices vanish:\vspace{-8pt}
		\begin{multicols}{2}
			\begin{enumerate}\addtolength{\itemsep}{0.25\baselineskip}
				\item $T (T {T^*}^2 - {T^*}^2 T)TT^*$,
				\item $T(T^2 {T^*}^2 - {T^*}^2 T^2)TT^*$,
				\item $T^2(T{T^*}^2 - {T^*}^2 T)T^2T^*$,
				\item $T(T^2 {T^*}^3 - {T^*}^3 T^2)TT^*$,
				\columnbreak
				\item $T [ (T^2 T^*)^2 - (T^* T^2)^2]TT^*$,
				\item $T^2 T^*(T^*T - TT^*)T^*T^2T^*$,
				\item $T^2(T{T^*}^3 - {T^*}^3 T) T^2 {T^*}^2$.
			\end{enumerate}
		\end{multicols}
	\end{Theorem}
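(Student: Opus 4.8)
The bulk of the work has already been carried out in the discussion preceding the statement: we have reduced the UECSM property to the validity of \eqref{eq-TraceTransposeReverse} for the seven indices $i = 12, 14, 15, 16, 18, 19, 20$, the cases $i = 1, \ldots, 11$ being automatic and the cases $i = 13, 17$ being equivalent to $i = 12, 16$. Moreover, the index $i = 20$ has been treated explicitly and already produces matrix (7). The plan is therefore to dispatch each of the six remaining indices by the same mechanism and to match the outcome with the corresponding entry in the displayed list.

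For a fixed index $i$, I would substitute $x = T$ and $y = T^*$ into $w_i$ and into its reverse $\widetilde{w}_i$, so that \eqref{eq-TraceTransposeReverse} reads $\tr w_i(T,T^*) = \tr \widetilde{w}_i(T,T^*)$. The only tool required is the cyclic invariance of the trace. I would use it to rotate the factors of $\widetilde{w}_i(T,T^*)$ until the rotated word shares the longest possible common prefix and suffix with $w_i(T,T^*)$, so that the two words differ only across a short interior block. The condition $\tr[w_i(T,T^*) - \widetilde{w}_i(T,T^*)] = 0$ then collapses to the vanishing of the trace of a single word of the shape $P(AB - BA)Q$, in which $P$ and $Q$ are the common prefix and suffix and $AB - BA$ is an interior commutator of powers of $T$ and $T^*$. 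Reading off $P$, the commutator, and $Q$ identifies the resulting matrix with one of the seven displayed expressions.

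Concretely, for $i = 12$ one has $w_{12}(T,T^*) = T^2 {T^*}^2 TT^*$, while a single cyclic rotation gives $\tr \widetilde{w}_{12}(T,T^*) = \tr T {T^*}^2 T^2 T^*$; subtracting produces the trace of $T(T{T^*}^2 - {T^*}^2 T)TT^*$, namely matrix (1). The indices $i = 14, 15, 16$ behave identically and yield matrices (2), (3), (4), whose interior commutators are $T^2 {T^*}^2 - {T^*}^2 T^2$, $T{T^*}^2 - {T^*}^2 T$, and $T^2 {T^*}^3 - {T^*}^3 T^2$, framed by the appropriate prefixes and suffixes. The index $i = 19$ leads to matrix (6), with the short commutator $T^*T - TT^*$ at its center. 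The sole case that departs from the template is $i = 18$: here the natural interior block is the difference $(T^2 T^*)^2 - (T^* T^2)^2$ of two reversed products rather than a single commutator, which is precisely the form recorded in matrix (5).

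I expect no conceptual obstacle; once the correct rotation is selected, each of the seven identities is a one-line consequence of the cyclic property of the trace. The only genuine difficulty is combinatorial: for each word one must locate the cyclic rotation of $\widetilde{w}_i$ that aligns its prefix and suffix with those of $w_i$, since a poorly chosen rotation leaves a difference that does not visibly factor through an interior commutator. The case $i = 18$ warrants slightly more care, as the interior block there is a difference of squares of reversed products rather than a bare commutator.
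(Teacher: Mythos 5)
Your proposal is correct and follows essentially the same route as the paper: the reduction to the seven indices $i = 12, 14, 15, 16, 18, 19, 20$ is exactly the paper's preceding discussion, and the paper then disposes of the remaining indices by the same cyclic-trace alignment you describe (it does $i=20$ explicitly and declares the rest ``similar computations''). Your index-to-matrix correspondence --- $(1)\leftrightarrow 12$, $(2)\leftrightarrow 14$, $(3)\leftrightarrow 15$, $(4)\leftrightarrow 16$, $(5)\leftrightarrow 18$, $(6)\leftrightarrow 19$, $(7)\leftrightarrow 20$ --- checks out, including the observation that $i=18$ produces a difference of reversed products rather than a bare commutator.
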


	For the sake of convenience, we adopt the following notation.
	For $i=1,2,\ldots,7$, let $\Psi_i(T)$ denote the trace of the $i$th matrix listed in Theorem \ref{TheoremTT4} 
	and define a function $\Psi:\M_4(\C)\to\C^7$ by setting
	$\Psi(T) = ( \Psi_1(T), \Psi_2(T),\ldots, \Psi_7(T))$.  In light of Theorem \ref{TheoremTT4}, we see that
	$T \in \M_4(\C)$ is UECSM if and only if $\Psi(T) = 0$.

	\begin{Example}	
		In \cite{UECSMMC} it is observed that neither of the matrices
		\begin{equation*}
			T_1=
			\begin{pmatrix}
				1 &0 & 0 & 0 \\
				0 & 0 & 2 & 0  \\
				0 & 0 & 0 & 2 \\
				0 &0 & 0 & 0 \\
			\end{pmatrix},\qquad
			T_2=
			\begin{pmatrix}
				1 &0 & 0 & 0 \\
				0 & 0 & 1 & 0  \\
				0 & 0 & 0 & 2 \\
				0 &0 & 0 & 0 \\
			\end{pmatrix} ,
		\end{equation*}
		are susceptible to testing with \texttt{UECSMTest} \cite{Tener}, 
		\texttt{ModulusTest} \cite{UECSMMC}, or \texttt{StrongAngleTest} \cite{UECSMGC},
		although ad-hoc arguments can be employed.  
		Since $\Psi(T_1) = 0$ and $\Psi(T_2) = (-12,0,0,0,0,0,0)$, we conclude that
		$T_1$ is UECSM and that $T_2$ is not.
	\end{Example}

\section{Canonical forms: $4 \times 4$ nilpotent UECSMs}\label{SectionN4}
	As an application example of Theorem \ref{TheoremTT4} we completely characterize those
	$4 \times 4$ nilpotent matrices which are UECSM.  This is an illuminating exercise for several reasons.
	First of all, characterizing objects up to unitary equivalence is typically a difficult task
	and previous work has mostly been confined to the $3 \times 3$ case (e.g., \cite[Thms.~5.1, 5.2]{ATTO}, \cite[Sect.~4]{Tener}).
	Second, we encounter many families which can be independently proven to be UECSM
	based upon purely theoretical considerations (i.e., providing independent confirmation of our results).  Finally, we discover several
	interesting classes of matrices which are UECSM but which do not
	fall into any previously known class.

	In light of Schur's Theorem on unitary triangularization, we restrict
	our attention to matrices of the form
	\begin{equation}\label{eq-Nilpotent}
		T = \ultramatrix{a}{b}{c}{d}{e}{f}.
	\end{equation}
	Noting that $T^3$ has at most one nonzero
	entry, we first consider the fourth and seventh conditions in Theorem \ref{TheoremTT4} since we expect
	these traces to be simple when expanded symbolically.   Indeed, a computation
	reveals that
	\begin{align}
		\Psi_4(T) &= |a|^2 |d|^2 |f|^2 (|a|^2 +|b|^2 - |e|^2 - |f|^2), \label{eq-Phi4} \\
		\Psi_7(T) &= |a|^2 |d|^4 |f|^2 (|a|^2 - |f|^2). \label{eq-Phi7}
	\end{align}
	Since we require $\Psi(T) = 0$, 
	we examine several special cases.

\subsection{The case $d=0$}
	A few routine computations tell us that
	$\Psi_i(T) = 0$ for $i = 2,3,4,5,7$.  For $i = 1$ and $i = 6$ we have
	\begin{align*}
		\Psi_1(T) &= |ae+bf|^2 (|a|^2 + |b|^2 - |e|^2 - |f|^2) , \\
		\Psi_6(T) &= \overline{c}(ae + bf) \Psi_1(T),
	\end{align*}
	whence $\Psi(T) = 0$ if and only if either
	\begin{equation}\label{eq-SurpriseNilpotent}
		 ae+bf = 0 
	\end{equation}
	or
	\begin{equation}\label{eq-NormSame}
		|a|^2 + |b|^2 = |e|^2 + |f|^2. 
	\end{equation}
	The condition \eqref{eq-SurpriseNilpotent} has a simple interpretation, for if 
	$d=0$, then $T^2 = 0$ if and only if \eqref{eq-SurpriseNilpotent} holds.  Now recall that a matrix which is nilpotent 
	of order two is UECSM \cite[Cor.~4]{SNCSO}.
	
	On the other hand, condition \eqref{eq-NormSame} does not have an obvious
	theoretical interpretation.  We remark that the third matrix in \eqref{eq-Stump} is obtained
	by setting $a = 2$, $b = 9$, $c=1$, $d = 0$, $e = 6$, $f = 7$ and noting that
	$2^2 + 9^2 = 85 = 6^2 + 7^2$.  The remaining three matrices in \eqref{eq-Stump} are not UECSM
	since their entries do not satisfy \eqref{eq-NormSame}.
	
\subsection{The case $a=0$}\label{SubsectionA}
	In this case, let us write 
	\begin{equation*}
		T = 
		\begin{pmatrix}
			\begin{array}{c|c|c|c}
				0&0&b&c\\
				0&0&d&e\\
				0&0&0&f\\
				0&0&0&0
			\end{array}
		\end{pmatrix}.
	\end{equation*}
	Another calculation shows that $\Psi_i(T) = 0$ for $i = 2,3,4,5,7$ and that
	\begin{align}
		\Psi_1(T) 
		&= |f|^2 \Big[\, (\underbrace{|b|^2 + |d|^2}_{\norm{v_3}^2} )
			( \underbrace{ |b|^2 + |d|^2 - |c|^2 - |e|^2 - |f|^2 }_{ \norm{v_3}^2 - \norm{v_4}^2}) 
			+ |\underbrace{b \overline{c} + d \overline{e} }_{\inner{v_3, v_4}} |^2\, \Big], \label{eq-Option1}\\
		\Psi_6(T) &=  f (\underbrace{ b \overline{c} + d \overline{e} }_{ \inner{ v_3, v_4}}) \Psi_1(T), \label{eq-Option2}
	\end{align}
	where $v_1, v_2, v_3, v_4$ denote the columns of $T$.
	Depending upon whether $f = 0$ or not, there are two cases to consider.
	\begin{enumerate}\addtolength{\itemsep}{0.5\baselineskip}
		\item If $f = 0$, then $\Psi(T) = 0$ whence $T$ is UECSM.  This agrees with theory, since
			in this case
			\begin{equation*}
				T = 
				\left(
				\begin{array}{cc|cc}
					0 & 0 & b & c \\
					0 & 0 & d & e \\
					\hline
					0 & 0 & 0 & 0 \\
					0 & 0 & 0 & 0 
				\end{array}
				\right)
			\end{equation*}
			is nilpotent of order two and hence UECSM by \cite[Cor.~4]{SNCSO}.
		
		\item If $f \neq 0$, then according to \eqref{eq-Option1} and \eqref{eq-Option2} there are several possibilities.
			\medskip
			\begin{enumerate}\addtolength{\itemsep}{0.5\baselineskip}
				\item If $|b|^2 + |d|^2 = 0$, then $b=d=0$ and $\Psi(T) = 0$ whence $T$ is UECSM.  This agrees
					with the fact that every rank-one matrix is UECSM \cite[Cor.~5]{SNCSO}.
					
				\item If $(|b|^2+|d|^2)(|b|^2 + |d|^2 - |c|^2 - |e|^2 - |f|^2) + |b \overline{c} + d \overline{e}|^2 =0$,
					then $T$ is UECSM.
					In particular, observe that if $v_3$ and $v_4$ are orthogonal vectors 
					with the same norm, then $\Psi(T) = 0$.  This agrees
					with the observation that every partial isometry on $\C^4$ is UECSM
					\cite[Cor.~2]{CSPI}.  Otherwise we obtain matrices which are UECSM but 
					which do not lie in any previously understood class.
			\end{enumerate}
	\end{enumerate}
	
\subsection{The case $f=0$}
	In this case we have
	\begin{equation*}
		T^t \cong
		\begin{pmatrix}
			0 & 0 & e & c\\
			0 & 0 & d & b\\
			0 & 0 & 0 & a\\
			0 & 0 & 0 & 0
		\end{pmatrix}.
	\end{equation*}
	We therefore have the same results as Subsection \ref{SubsectionA}, after exchanging the roles of $a$ and $f$, and $b$ and $e$, respectively.

\subsection{The case $a,d,f \neq 0$}
	If $a,d,f \neq 0$, then it follows from \eqref{eq-Phi4} and \eqref{eq-Phi7} that the conditions
	$|a| = |f|$ and $|b| = |e|$ are necessary for $T$ to be UECSM.  In fact, we claim that these
	conditions are also sufficient.  Indeed, if $|a| = |f|$ and $|b| = |e|$, then upon conjugating $T$ by a diagonal unitary matrix we see that
	\begin{equation*}
		T \cong \ultramatrix{a}{b}{c}{d}{b}{a},
	\end{equation*}
	which is unitarily equivalent to its transpose via the symmetric unitary matrix
	\begin{equation*}
		U = 
		\begin{pmatrix}
			0&0&0&1\\
			0&0&1&0\\
			0&1&0&0\\
			1&0&0&0
		\end{pmatrix}.
	\end{equation*}
	Thus $T$ is UECSM whenever $a,d,f \neq 0$, $|a|=|f|$, and $|b| = |e|$.
	
	\bigskip
	
	The following theorem summarizes our findings:

	\begin{Theorem}\label{TheoremN4}
		The matrix
		\begin{equation*}	
			T = \ultramatrix{a}{b}{c}{d}{e}{f}
		\end{equation*}
		is UECSM if and only if at least one of the following occurs:\smallskip
		\begin{enumerate}\addtolength{\itemsep}{0.5\baselineskip}
			\item $d=0$ and $ae+bf = 0$,
			\item $d=0$ and $|a|^2+|b|^2 = |e|^2 + |f|^2$,
			\item $a=0$ and $f=0$,
			\item $a=0$ and $(|b|^2+|d|^2)(|b|^2 + |d|^2 -|c|^2 - |e|^2 - |f|^2) + | b \overline{c} + d \overline{e}|^2 = 0$,
			\item $f = 0$ and $(|d|^2+|e|^2)(|d|^2+|e|^2 - |a|^2 - |b|^2 - |c|^2) + | c \overline{e} + b \overline{d}|^2 = 0$,
			\item $|a| = |f|$ and $|b|=|e|$.
		\end{enumerate}
	\end{Theorem}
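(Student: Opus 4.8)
The plan is to invoke Theorem \ref{TheoremTT4}, which reduces the question of whether $T$ is UECSM to the single condition $\Psi(T) = 0$, that is, the simultaneous vanishing of the seven trace functions $\Psi_1, \ldots, \Psi_7$. Because $T$ is strictly upper triangular with entries $a,b,c,d,e,f$, each $\Psi_i(T)$ is an explicit polynomial in these entries and their conjugates, so establishing the theorem amounts to solving a system of polynomial equations and organizing its solution set into the six listed conditions. Since Schur's theorem places every $4 \times 4$ nilpotent matrix into the form \eqref{eq-Nilpotent} up to unitary equivalence, the characterization for this form is completely general, and the whole theorem follows from the single equivalence ``$\Psi(T) = 0$ if and only if at least one of (1)--(6) holds.''

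First I would compute the two functions I expect to factor most cleanly. Since $T^3$ has at most one nonzero entry, the long words in $\Psi_4$ and $\Psi_7$ collapse, yielding the factorizations \eqref{eq-Phi4} and \eqref{eq-Phi7}. These already dictate the shape of the argument: the factors $|a|^2|d|^2|f|^2$ and $|a|^2|d|^4|f|^2$ force a dichotomy according to whether each of $a,d,f$ vanishes. This motivates splitting into the cases $d=0$, $a=0$, $f=0$, and $a,d,f \neq 0$, which together exhaust all matrices of the given form: every matrix has $d=0$ or $d \neq 0$, and in the latter situation either $a=0$, or $f=0$, or both are nonzero.

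In each case I would substitute the vanishing entries and recompute the remaining $\Psi_i$. For $d=0$, a direct calculation shows $\Psi_i(T)=0$ automatically for $i \in \{2,3,4,5,7\}$, while $\Psi_1$ and $\Psi_6 = \overline{c}(ae+bf)\Psi_1$ reduce to a product whose vanishing is equivalent to $ae+bf = 0$ or $|a|^2+|b|^2 = |e|^2+|f|^2$, giving conditions (1) and (2). For $a=0$, the same five traces vanish and $\Psi_1, \Psi_6$ factor through the columns $v_3, v_4$ as in \eqref{eq-Option1} and \eqref{eq-Option2}; splitting on whether $f=0$ produces conditions (3) and (4). The case $f=0$ I would dispatch by passing to $T^t$, which is unitarily equivalent to the matrix obtained by swapping $a \leftrightarrow f$ and $b \leftrightarrow e$, so condition (5) follows from the $a=0$ analysis by symmetry. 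Finally, when $a,d,f \neq 0$, the factorizations \eqref{eq-Phi4} and \eqref{eq-Phi7} force $|a|=|f|$ and $|a|^2+|b|^2 = |e|^2+|f|^2$, hence $|b|=|e|$; for the reverse direction I would avoid recomputing all seven traces and instead conjugate by a diagonal unitary to reach the form with lower entries $a,b,c,d,b,a$, then exhibit the symmetric unitary $U$ that conjugates this to its transpose, establishing condition (6).

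The main obstacle is the sheer bulk of the symbolic trace computations: each $\Psi_i$ is the trace of a degree-six or degree-eight word in $T$ and $T^*$, and expanding these in the six complex entries and their conjugates by hand is unwieldy. The conceptual work lies in choosing the case split so that most traces vanish identically and the survivors factor recognizably---in particular, spotting that $\Psi_6$ is a scalar multiple of $\Psi_1$ in each degenerate case, and that the surviving factor in the $a=0$ case is exactly the Gram-type expression $\norm{v_3}^2(\norm{v_3}^2 - \norm{v_4}^2) + |\inner{v_3,v_4}|^2$. These reductions are what render the polynomial system tractable and, as a bonus, align the resulting algebraic conditions with the known UECSM classes (order-two nilpotents, rank-one matrices, and partial isometries) that independently confirm them.
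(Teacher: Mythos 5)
Your proposal is correct and follows essentially the same route as the paper: reduce to $\Psi(T)=0$ via Theorem \ref{TheoremTT4}, use the factorizations \eqref{eq-Phi4} and \eqref{eq-Phi7} to drive the case split $d=0$, $a=0$, $f=0$, and $a,d,f\neq 0$, handle $f=0$ by transpose symmetry with the $a=0$ case, and settle sufficiency in the last case by a diagonal unitary conjugation followed by the antidiagonal symmetric unitary. The paper's proof is organized exactly this way, down to the observation that $\Psi_6$ is a scalar multiple of $\Psi_1$ in the degenerate cases and the Gram-type factorization in the $a=0$ case.
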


\section{An angle criterion in three dimensions}\label{SectionWAT}

	Suppose that $T \in \M_n(\C)$ has distinct eigenvalues $\lambda_1, \lambda_2, \ldots, \lambda_n$
	with corresponding normalized eigenvectors $x_1,x_2,\ldots, x_n$.  Let
	$y_1,y_2,\ldots, y_n$ denote normalized eigenvectors of $T^*$ 
	corresponding to the eigenvalues $\overline{\lambda_i}$.  
	Observe that $y_j$ is characterized up to  a scalar multiple by the fact that $\inner{x_i,y_j} = 0$ when $i \ne j$.
	Under these circumstances, it is known that the condition 
	\begin{equation}\label{eq-WAT}
		| \inner{ x_i, x_j}| = | \inner{ y_i, y_j} |
	\end{equation}
	for $1 \leq i < j \leq n$ is necessary for $T$ to be UECSM \cite[Thm.~1]{UECSMGC}
	(in fact, the first use of such a procedure in this context dates back to \cite[Ex.~7]{CSOA}).
	
	Although it was initially unclear whether \eqref{eq-WAT} is sufficient for $T$ to be UECSM,
	L.~Balayan and the first author eventually showed that there exist matrices
	$4 \times 4$ and larger which satisfy \eqref{eq-WAT} but which are not UECSM.  
	These counterexamples will be discussed further in Section \ref{SectionInsuffWAT}.
	On the other hand, based upon extensive numerical evidence they also conjectured
	that \eqref{eq-WAT} \emph{is} sufficient in the $3 \times 3$ case \cite[Sec.~6]{UECSMGC}.
	Theorem \ref{TheoremWAT} below settles this conjecture in the affirmative.
	
	Strangely enough, the proof relies critically upon complex function theory 
	and the emerging theory of truncated Toeplitz operators.
	Interest in truncated Toeplitz operators has blossomed over the last several years
	\cite{BCFMT, MR2597679, TTOSIUES, CRW, NLEPHS, Sarason, MR2468883, MR2418122, Sed, STZ},
	sparked by a seminal paper of D.~Sarason \cite{Sarason}.
	In \cite{ATTO}, W.T.~Ross and the first two authors established that 
	if $T \in {\bf M}_3(\C)$ has distinct eigenvalues $\lambda_1,\lambda_2,\lambda_3$ with corresponding normalized eigenvectors 
	$x_1, x_2, x_3$ satisfying $\inner{x_i,x_j} \neq 0$
	for $1 \leq i,j \leq 3$, then the following are equivalent:
	\begin{enumerate}\addtolength{\itemsep}{0.5\baselineskip}
		\item $T$ is unitarily equivalent to a complex symmetric matrix,
		\item $T$ is unitarily equivalent to an analytic truncated Toeplitz operator,
		\item The condition
			\begin{equation}\label{eq-Determinant}
				\det X^*X = (1 - | \inner{x_1 , x_2 }|^2)
				(1 - | \inner{x_2 , x_3 }|^2)(1 - | \inner{x_3 , x_1 }|^2)
			\end{equation}
			holds, where $X = ( x_1 | x_2 | x_3)$ is the matrix having
			$x_1, x_2, x_3$ as its columns.
	\end{enumerate}
	\smallskip
	In particular, a direct proof that (3) $\Rightarrow$ (1), independent of the theory of truncated
	Toeplitz operators, has not yet been discovered.

	\begin{Theorem}\label{TheoremWAT}
		Suppose that $T \in \M_3(\C)$ has distinct eigenvalues $\lambda_1, \lambda_2, \lambda_3$
		with corresponding unit eigenvectors $x_1, x_2, x_3$.  Let $y_1, y_2, y_3$
		denote unit eigenvectors of $T^*$ corresponding to the eigenvalues 
		$\overline{\lambda_1}, \overline{\lambda_2}, \overline{\lambda_3}$.  Under these circumstances,
		the condition \eqref{eq-WAT} is necessary and sufficient for $T$ to be UECSM.
	\end{Theorem}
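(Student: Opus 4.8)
Since the necessity of \eqref{eq-WAT} is already established in \cite[Thm.~1]{UECSMGC}, the plan is to prove sufficiency. The engine of the argument is the observation that the eigenvectors of $T^*$ are governed by the Gram matrix of the eigenvectors of $T$. Let $X = (x_1|x_2|x_3)$; since the $\lambda_i$ are distinct, $X$ is invertible and $G := X^*X$ is positive definite. Writing $T = X\diag(\lambda_1,\lambda_2,\lambda_3)X^{-1}$ and taking adjoints shows that each $y_j$ is a unimodular multiple of the normalization of the $j$th column of $(X^*)^{-1}$. Hence $Y := (y_1|y_2|y_3)$ satisfies $Y = (X^*)^{-1}N$ for some diagonal $N$, so that $Y^*Y = N^*G^{-1}N$. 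Comparing diagonal entries forces $|N_{jj}|^2 = 1/(G^{-1})_{jj}$, and therefore
\begin{equation*}
	|\inner{y_i,y_j}| = \frac{|(G^{-1})_{ij}|}{\sqrt{(G^{-1})_{ii}(G^{-1})_{jj}}}.
\end{equation*}

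The next step is to make this formula explicit. I would record $G$ as a Hermitian matrix with unit diagonal whose off-diagonal entries have moduli $|\inner{x_1,x_2}|$, $|\inner{x_2,x_3}|$, $|\inner{x_3,x_1}|$, and compute the entries of $G^{-1}$ by cofactors (each is a $2\times 2$ minor divided by $\det G>0$). Substituting into the displayed identity turns each of the three equations in \eqref{eq-WAT} into a polynomial relation among these three moduli and the single real scalar $\operatorname{Re}(\inner{x_1,x_2}\inner{x_2,x_3}\inner{x_3,x_1})$. The decisive point, and the algebraic heart of the proof, is that after clearing the common factor $\det G$ and cancelling, all three relations collapse to one and the same symmetric identity, which upon inspection is exactly the determinant condition \eqref{eq-Determinant}. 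In other words, \eqref{eq-WAT} holds if and only if \eqref{eq-Determinant} does.

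It then remains to invoke the cited results. If $\inner{x_i,x_j}\neq 0$ for all $i,j$, the equivalence of (1) and (3) recorded above from \cite{ATTO} identifies \eqref{eq-Determinant} with $T$ being UECSM, finishing this case. If instead some off-diagonal inner product vanishes, the displayed formula settles matters directly: were exactly one of them zero while the other two were nonzero, that formula would return a strictly positive value for the corresponding $|\inner{y_i,y_j}|$, contradicting \eqref{eq-WAT}; hence \eqref{eq-WAT} forces at least two of the three off-diagonal inner products to vanish. Those two pairs share a common index $k$, so $x_k$ is orthogonal to the span of the other two eigenvectors and $T$ reduces as an orthogonal direct sum $T \cong \lambda_k \oplus T'$ with $T'\in\M_2(\C)$; as every matrix of size at most two is UECSM, so is $T$. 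I expect the main obstacle to be the cofactor computation together with the verification that the three superficially distinct equations of \eqref{eq-WAT} collapse to the single condition \eqref{eq-Determinant}; granting that coincidence, the theorem follows at once.
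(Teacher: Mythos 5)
Your proposal is correct, and the heart of it—the derivation of \eqref{eq-Determinant} from \eqref{eq-WAT}—runs along a genuinely different track than the paper's, even though both arguments share the same skeleton: treat the case of orthogonal eigenvectors separately, and otherwise invoke the result of \cite{ATTO} that \eqref{eq-Determinant} implies UECSM. The paper never inverts the Gram matrix; instead it proves, by multilinearity of the determinant, the identity $\det X^*X = |\inner{x_1,y_1}|^2(1-|\inner{x_2,x_3}|^2)$ together with its two relabelings and the companion identity $\det Y^*Y = |\inner{x_1,y_1}|^2(1-|\inner{y_2,y_3}|^2)$; then \eqref{eq-WAT} forces $|\det X| = |\det Y|$, and multiplying the three identities and using that $Y^*X$ is diagonal yields \eqref{eq-Determinant}. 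You instead compute $(X^*X)^{-1}$ by cofactors, and your pivotal "collapse'' claim, which you assert but do not display, does check out: writing $\alpha = \inner{x_2,x_1}$, $\beta = \inner{x_3,x_1}$, $\gamma = \inner{x_3,x_2}$, the pair-$(1,2)$ equation of \eqref{eq-WAT} becomes $|\alpha|^2(1-|\beta|^2)(1-|\gamma|^2) = |\alpha - \beta\overline{\gamma}|^2$, which simplifies to
\begin{equation*}
	2\operatorname{Re}(\alpha\overline{\beta}\gamma) \;=\; |\alpha|^2|\beta|^2 + |\beta|^2|\gamma|^2 + |\gamma|^2|\alpha|^2 - |\alpha|^2|\beta|^2|\gamma|^2,
\end{equation*}
and this symmetric identity is precisely \eqref{eq-Determinant} expanded (the $(1,3)$ and $(2,3)$ equations reduce to the same thing). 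A pleasant byproduct of your route, which the paper's argument does not give, is that any \emph{single} one of the three equations in \eqref{eq-WAT} already implies \eqref{eq-Determinant}. Your handling of the orthogonal case is also sound and arguably cleaner: the paper passes to a Schur triangular form and verifies the trace criterion \eqref{eq-TraceTest}, whereas your Gram-matrix formula shows directly that \eqref{eq-WAT} rules out exactly one vanishing inner product, so two must vanish, giving an orthogonal decomposition $T \cong \lambda_k \oplus T'$ with $T' \in \M_2(\C)$, and hence UECSM (this is the alternative the paper itself mentions parenthetically). What the paper's multilinear argument buys is economy—no cofactor expansion and no case-by-case algebra; what your computation buys is the explicit equivalence of each individual angle condition with the determinant condition, and a degenerate-case analysis that needs no change of coordinates.
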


\begin{proof}
	Since the necessity of \eqref{eq-WAT} is well-known \cite[Thm.~1]{UECSMGC}, we focus here on sufficiency.
	We first show that it suffices to consider the case where $\inner{x_i,x_j} \neq 0$ for $1 \leq i,j \leq 3$.
	
	Suppose that $T$ has a pair of eigenvectors which are orthogonal.  Upon scaling, translating
	by a multiple of the identity, and applying Schur's Theorem on unitary triangularization, we may further assume that
	\begin{equation*}
		T \cong \megamatrix{0}{0}{0}{a}{1}{0}{b}{0}{\lambda}
	\end{equation*}
	where $\lambda \neq 0,1$.  Since $T$ satisfies \eqref{eq-WAT}, the eigenspaces of $T^*$ corresponding to the eigenvalues $1$ and $\overline{\lambda}$ must be orthogonal.  A routine calculation shows that $(\overline{a},1,0)$ and $(\overline{b},0,\overline{\lambda})$ are eigenvectors of $T^*$ with eigenvalues $1$ and $\overline{\lambda}$, respectively, and so we must have $a=0$ or $b=0$.  It is straightforward to check that $T$ satisfies \eqref{eq-TraceTest} in either case, and thus $T$ is UECSM (one could also observe that 
both cases lead to the conclusion that $T$ is unitarily equivalent to the direct sum of a 
$2 \times 2$ and a $1 \times 1$ matrix whence $T$ is UECSM by any of 
\cite[Cor.~3]{UECSMGC}, \cite[Cor.~3.3]{Chevrot}, \cite[Ex.~6]{CSOA}, \cite{UET},
\cite[Cor.~1]{SNCSO}, \cite[p.~477]{McIntosh}, \cite[Cor.~3]{Tener}, or \cite[Ex.~2]{UECSMMC}).
	
	Assuming now that $\inner{x_i,x_j} \neq 0$ for $1 \leq i,j \leq 3$, we intend to use the fact
	that \eqref{eq-Determinant} implies that $T$ is UECSM.
	Let $X= ( x_1 | x_2 | x_3)$ and $Y = ( y_1 | y_2 | y_3)$
	denote the $3 \times 3$ matrices having the vectors $x_1, x_2, x_3$ 
	and $y_1, y_2, y_3$ as columns, respectively.  In particular, note that
	\begin{equation}\label{eq-Duality}
		Y^*X = \megamatrix{ \inner{ x_1, y_1} }{0}{0}{0}
		{ \inner{ x_2, y_2} }{0}{0}{0}{\inner{x_3, y_3}}.
	\end{equation}
	
	We now claim that
	\begin{equation}\label{eq-ComputationalClaim}
		\det X^*X = | \inner{ x_1, y_1} |^2 ( 1 - | \inner{ x_2, x_3} |^2).
	\end{equation}
	Since $y_1$ is a unit vector orthogonal to $x_2$ and $x_3$, we may write 
	\begin{equation*}
		x_1 = \inner{x_1,y_1}y_1 + x^\prime
	\end{equation*}
	for some $x^\prime$ in $\operatorname{span} \{x_2, x_3\}$.  Let $\Lambda$ be the multilinear function given by
	\begin{equation*}
		\Lambda(w_1,w_2,w_3) = \det X^*W,
	\end{equation*}
	where $W=(w_1|w_2|w_3)$ is the matrix whose columns are the $w_i$.  We then have
	\begin{align*}
		\det X^*X &= \Lambda(x_1,x_2,x_3)\\
		&= \inner{x_1,y_1} \Lambda(y_1,x_2,x_3) + \Lambda(x^\prime,x_2,x_3)
	\end{align*}
	Since $x^\prime$ belongs to $\operatorname{span} \{x_2,x_3\}$, the second term vanishes and we have
	\begin{equation*}
		\det X^*X = \inner{x_1,y_1} \det \megamatrix{\inner{y_1,x_1}}{\inner{x_2,x_1}}{\inner{x_3,x_1}}{0}{1}{\inner{x_3,x_2}}{0}{\inner{x_2,x_3}}{1},
	\end{equation*}
	from which the desired condition \eqref{eq-ComputationalClaim} is immediate.

	Similarly we obtain
	\begin{align}
		\det X^*X &=  |\inner{x_2,  y_2}|^2(1 - | \inner{ x_3, x_1} |^2) ,\label{eq-CS2}\\
		\det X^*X &=  |\inner{x_3,  y_3}|^2(1 - | \inner{ x_1, x_2} |^2),\label{eq-CS3}
	\end{align}
	by relabeling the indices and using the same argument.  Moreover,
	we can also perform these computations with $Y^*Y$ in place of $X^*X$, which provides
	\begin{equation}\label{eq-ComputationalClaim2}
		\det Y^*Y = |\inner{x_1,  y_1}|^2(1 - | \inner{ y_2, y_3} |^2).
	\end{equation}
	
	Thus if $T$ satisfies \eqref{eq-WAT}, then it follows from 
	\eqref{eq-ComputationalClaim} and \eqref{eq-ComputationalClaim2} that
	\begin{equation*}
		|\det X|^2 = \det X^*X = \det Y^*Y = | \det Y|^2,
	\end{equation*}
	whence $|\det X| = |\det Y|$.
	Multiplying \eqref{eq-ComputationalClaim}, \eqref{eq-CS2}, and \eqref{eq-CS3} together and appealing to \eqref{eq-Duality},
	we obtain
	\begin{equation*}
		(\det X^*X)^3 = |\det Y^*X|^2 (1 - | \inner{ x_1, x_2} |^2)(1 - | \inner{ x_2, x_3} |^2)
		(1 - | \inner{ x_3, x_1} |^2).
	\end{equation*}
	However, since $\det X^*X = |\det X|^2 = |\det Y||\det X| = |\det Y^*X|$ it follows from the preceding that
	\begin{equation*}
		\det X^*X = (1 - | \inner{ x_1, x_2} |^2)(1 - | \inner{ x_2, x_3} |^2)
		(1 - | \inner{ x_3, x_1} |^2).
	\end{equation*}
	As we have discussed above, this establishes that $T$ is UECSM.
\end{proof}

\section{The angle criterion in dimensions $n \geq 4$}\label{SectionInsuffWAT}
	Following the notation and conventions established in Section \ref{SectionWAT}, 
	we assume that the matrix $T$ in $\M_n(\C)$ has distinct eigenvalues 
	$\lambda_1, \lambda_2,\ldots,\lambda_n$ and corresponding normalized
	eigenvectors $x_1,x_2,\ldots,x_n$.  Similarly, we select normalized eigenvectors of $T^*$ corresponding
	to the eigenvalues $\overline{\lambda_1}, \overline{\lambda_2}, \ldots, \overline{\lambda_n}$
	and denote them $y_1,y_2,\ldots,y_n$.  Recall from the preceding discussion 
	that the condition
	\begin{equation}\tag{\ref*{eq-WAT}}
		| \inner{ x_i, x_j}| = | \inner{ y_i, y_j} |
	\end{equation}
	for $1 \leq i < j \leq n$ is necessary and sufficient for $T$ to be UECSM if $n \leq 3$,
	but insufficient if $n \geq 4$.  Indeed, there exist matrices $4 \times 4$ or larger which are \emph{not} UECSM
	but which nevertheless satisfy \eqref{eq-WAT}.  The first known example
	was discovered by L.~Balayan using a random search of matrices having integer 
	entries \cite[Ex.~5]{UECSMGC}.  In this section, we provide a solid theoretical explanation for 
	the existence of such counterexamples and we illustrate this process by constructing 
	L.~Balayan's original counterexample from scratch.
	
	Unlike \eqref{eq-WAT}, the related condition
	\begin{equation}\label{eq-SAT}
	 	\inner{x_i,x_j}\inner{x_j,x_k}\inner{x_k,x_i} = \overline{\inner{y_i,y_j}\inner{y_j,y_k}\inner{y_k,y_i}},
	\end{equation}
	for $1 \le i \le j \le k \le n$, is equivalent to asserting that $T$ is UECSM \cite[Thm.~2]{UECSMGC}.  
	Following \cite{UECSMGC}, we refer to \eqref{eq-SAT} as the \emph{Strong Angle Test} (SAT) 
	and \eqref{eq-WAT} as the \emph{Weak Angle Test} (WAT).   Observe that the WAT can be obtained
	from the SAT by setting $k=j$ in \eqref{eq-SAT}.  In particular, we remark that a matrix which
	passes the SAT automatically passes the WAT, although the converse does not hold.
	 
	Curiously, the counterexample discussed above satisfies the related condition
	\begin{equation}\label{eq-LSAT}
	 	\inner{x_i,x_j}\inner{x_j,x_k}\inner{x_k,x_i} =\inner{y_i,y_j}\inner{y_j,y_k}\inner{y_k,y_i},
	\end{equation}
	for $1 \le i \le j \le k \le n$ \cite[Ex.~5]{UECSMGC}.   
	We say that a matrix which satisfies \eqref{eq-LSAT} passes the \emph{Linear Strong
	Angle Test} (LSAT).   Our aim in this section is to describe a method 
	for producing matrices which pass the LSAT \eqref{eq-LSAT}
	and hence the WAT \eqref{eq-WAT}, but not the SAT \eqref{eq-SAT}.

	\begin{Theorem}\label{TheoremLSAT}
		A matrix $T$ in $\M_n(\C)$ which has distinct eigenvalues satisfies the Linear
		Strong Angle Test \eqref{eq-LSAT} 
		if and only if $T$ is unitarily equivalent to a matrix of the form
		$QDQ^{-1}$ where $D$ is diagonal and $Q$ belongs to $\SU(k,n-k)$ for some $1 \leq k \leq n$.
	\end{Theorem}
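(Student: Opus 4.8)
The plan is to transfer everything to the Gram matrix $G = X^*X$ of a fixed eigenvector basis. Since unitary equivalence preserves all of the inner products $\inner{x_i,x_j}$ and $\inner{y_i,y_j}$ (and hence the validity of \eqref{eq-LSAT}), I would fix a diagonalization $T = XDX^{-1}$ with $X = (x_1 \mid \cdots \mid x_n)$ and $D = \diag(\lambda_1,\ldots,\lambda_n)$, and record once and for all that the eigenvectors of $T^*$ are the columns of $X^{-*}$; that is, $Y = X^{-*}R$ for some invertible diagonal $R$, which yields the identity $Y^*Y = R^*(X^*X)^{-1}R$ independently of \eqref{eq-LSAT}. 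The sufficiency direction is then short. If $T \cong QDQ^{-1}$ with $Q^*JQ = J$ for $J = \diag(I_k,-I_{n-k})$, then $J^{-1}=J$ gives $Q^{-*} = JQJ$, so the eigenvectors of $T^*$ are proportional to $Jx_i$, where $x_i$ is the $i$th column of $Q$. As $J$ is a real diagonal sign matrix, $J^*J = I$, whence $\inner{Jx_i,Jx_j} = \inner{x_i,x_j}$ and $\norm{Jx_i}=\norm{x_i}$; after normalizing, the Gram matrices of $\{x_i\}$ and $\{y_i\}$ coincide, so in fact $\inner{x_i,x_j}=\inner{y_i,y_j}$ for all $i,j$, which is much stronger than \eqref{eq-LSAT}.

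For necessity I would separate a \emph{gauge} step from an \emph{inertia} step. Writing $g_{ij}=\inner{x_i,x_j}$ and $h_{ij}=\inner{y_i,y_j}$, the instances $k=j$ of \eqref{eq-LSAT} already force $|g_{ij}|=|h_{ij}|$, while the remaining instances assert that the triangle products $g_{ij}g_{jk}g_{ki}$ and $h_{ij}h_{jk}h_{ki}$ agree. Exactly as in the treatment of the orthogonal case in the proof of Theorem \ref{TheoremWAT}, I would first dispose of configurations in which some $\inner{x_i,x_j}$ vanishes, and then, assuming all inner products are nonzero, solve the system $\theta_i - \theta_j = \arg(h_{ij}/g_{ij})$ using the triangle relations through a fixed vertex. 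This produces a diagonal unitary $\Delta$ with $H := Y^*Y = \Delta G \Delta^*$. Combining this with $H = R^*G^{-1}R$ gives $G^{-1} = M^*GM$ for the diagonal matrix $M = \Delta^*R^{-1}$.

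The inertia step extracts a genuine signature from $M$. Inverting $G^{-1}=M^*GM$ and substituting shows that $M\overline{M}^{-1}$ commutes with $G$; since $G$ has no zero entries, $M\overline{M}^{-1}$ must be a scalar, so $M = e^{i\phi}E$ with $E$ real and diagonal, and therefore $G^{-1}=EGE$. Setting $D' = E^{-1}$ and $J = X^{-*}D'X^{-1}$, the identity $G = D'G^{-1}D'$ translates precisely into $J^*=J$ and $J^2=I$, so $J$ is a signature matrix of some signature $(k,n-k)$ and $X^*JX = D'$ is diagonal. To finish, I would conjugate by a unitary carrying $J$ to the standard $J_0=\diag(I_k,-I_{n-k})$, reorder the eigenvalues so that the positive entries of $D'$ come first (their number equals $k$ by Sylvester's law of inertia), and rescale the columns by $|d_i'|^{-1/2}$; this yields $Q$ with $Q^*J_0Q = J_0$ and $T \cong QDQ^{-1}$, and a final unimodular scaling (legitimate because $|\det Q|=1$) places $Q$ in $\SU(k,n-k)$.

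I expect the gauge step to be the crux: converting the purely triangular data of \eqref{eq-LSAT} into the global statement $H = \Delta G \Delta^*$, and in particular handling configurations in which some eigenvector inner products vanish, since there the triangle relations need no longer pin down all of the relative phases (the fully degenerate case, all $\inner{x_i,x_j}=0$ for $i\ne j$, corresponds to normal $T$ and $k=n$). By comparison the inertia step and the concluding $\SU(k,n-k)$ bookkeeping—aligning the signs of $D'$ with $J_0$ and arranging $\det Q = 1$—should be routine.
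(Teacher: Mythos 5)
Your proposal is sound in the main case and, on the necessity direction, follows a genuinely different route from the paper's. (The sufficiency half is the same argument: the paper also produces $y_i = Ax_i$ and gets the stronger equality $\inner{x_i,x_j} = \inner{y_i,y_j}$.) For necessity, the paper's engine is Lemma \ref{LemmaLSATUnitary}: after the same gauge step you describe (phases $\gamma_i = \inner{y_1,y_i}/\inner{x_1,x_i}$, consistency from the triangle relations), it constructs a unitary $R$ with $Rx_i = \gamma_i y_i$, proves that $R^2$ commutes with $T^*$, and invokes irreducibility (in general, a decomposition into maximal reducing subspaces) to force $R^2 = \omega I$; this upgrades $R$ to a \emph{selfadjoint} unitary $U$ with $Ux_i = \alpha_i y_i$, whence $X^*UX$ commutes with $D$ and is real diagonal, and column rescaling plus Sylvester's law and the spectral theorem yield $Q = ZW$ in $\SU(k,n-k)$. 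You replace this operator-theoretic lemma by Gram-matrix algebra: the duality $Y = (X^*)^{-1}R$ gives $H = R^*G^{-1}R$ unconditionally, the gauge step gives $H = \Delta G \Delta^*$, and the rigidity that the paper extracts from irreducibility you extract from a commutant computation ($M\overline{M}^{-1}$ commutes with $G$, hence is scalar when $G$ has no zero entries); your $J = (X^*)^{-1}D'X^{-1}$ is then automatically a selfadjoint involution, and your endgame (spectral theorem, Sylvester, reordering and rescaling columns) coincides with the paper's. When all $\inner{x_i,x_j} \neq 0$ your argument is complete, arguably more elementary, and even attends to a detail the paper elides: the final unimodular scaling taking $Q$ from $Q^*AQ = A$ (membership in the indefinite unitary group) to determinant one. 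What the paper's route buys instead is the intermediate identity $T = U(X^*)^{-1}DX^*U$ with $U$ selfadjoint unitary, which is reused verbatim in the proof of Theorem \ref{TheoremSimple}.

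The genuine gap is the degenerate case, which you correctly flag as the crux but do not close; moreover, the reduction you gesture at (``as in the treatment of the orthogonal case in the proof of Theorem \ref{TheoremWAT}'') is not available, since that argument is specific to $3 \times 3$ Schur forms. Vanishing inner products hit your proof twice: in the gauge step, a triangle with a zero edge yields a vacuous relation, so the system $\theta_i - \theta_j = \arg(h_{ij}/g_{ij})$ need not be solvable by propagation from a fixed vertex; and in the inertia step, the commutant argument only shows that $M\overline{M}^{-1}$ is constant on each connected component of the zero pattern of $G$, not scalar. The paper handles exactly this by inductively completing the partially defined phase matrix $(\beta_{ij})$, deferring to the proof of \cite[Thm.~2]{UECSMGC} \emph{mutatis mutandis}, and by running the $R^2 = \omega I$ argument separately on each maximal proper reducing subspace. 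The same repair fits your framework: because the eigenvalues are distinct, the connected components of the zero pattern of $G$ span mutually orthogonal reducing subspaces of $T$, so one can argue blockwise and take a direct sum of the resulting signature matrices; but inside a single component the phase completion still requires the inductive argument of \cite{UECSMGC}, because a component is only chain-connected and triangles within it can degenerate.
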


	Here $\SU(k,n-k)$ refers to the group of complex matrices having 
	determinant $1$ and which preserve the Hermitian form
	\begin{equation*}
		\inner{v,w}_k := \sum_{j=1}^k v_j\overline{w_j} ~- \sum_{j=k+1}^n v_j \overline{w_j}.
	\end{equation*}
	In particular, we observe that a matrix $Q$ belongs to $\SU(k,n-k)$ if and only if 
	\begin{equation}\label{eq-QAQA}
		Q^*AQ = A,
	\end{equation}
	where
	\begin{equation*}
		A:= I_k \oplus -I_{n-k}.
	\end{equation*}	
	In contrast, a matrix passes the Strong Angle Test \eqref{eq-SAT} if and only if it is unitarily equivalent to a matrix 
	of the form $QDQ^{-1}$ where $D$ is diagonal and 
	$Q$ belongs to $\O(n)$, the complex orthogonal group of order $n$ \cite[Thm.~4.4.13]{HJ}
	(see also \cite[Sect.~5]{ESCSO}).

	In order to prove Theorem \ref{TheoremLSAT}, we require the following lemma.

	\begin{Lemma}\label{LemmaLSATUnitary}
		Maintaining the notation and conventions established above,
		if a matrix $T$ in $\M_n(\C)$ 
		satisfies the Linear Strong Angle Test \eqref{eq-LSAT}, then there is a selfadjoint unitary matrix 
		$U$ and unimodular constants $\alpha_1,\alpha_2,\ldots,\alpha_n$
		such that $Ux_i = \alpha_i y_i$ for $1 \le i \le n$.
	\end{Lemma}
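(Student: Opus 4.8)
The plan is to convert the existence of $U$ into a concrete matrix equation, and then to manufacture a \emph{real} (hence self-adjoint) solution out of LSAT by an involution trick on Gram matrices. Write $X = (x_1|\cdots|x_n)$ and $Y=(y_1|\cdots|y_n)$; since $T$ has distinct eigenvalues both are invertible, and the biorthogonality $\inner{x_i,y_j}=0$ for $i\neq j$ shows $Y^*X = \diag(c_1,\ldots,c_n)$ with $c_i=\inner{x_i,y_i}\neq0$, so $X^{-1}=\diag(c_i)^{-1}Y^*$. Consequently \emph{any} linear $U$ with $Ux_i=\alpha_i y_i$ must equal $U = Y\,\diag(\alpha_i/c_i)\,Y^*$. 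Setting $D:=\diag(\alpha_i/c_i)$ and $G:=Y^*Y$ (positive definite), this $U$ is self-adjoint iff $D$ is real, and a self-adjoint $U$ is unitary iff $U^2=I$, which unwinds via $Y G^{-1}Y^*=I$ to $DGD=G^{-1}$. Thus the lemma reduces to finding a \emph{real} diagonal $D$ with $DGD=G^{-1}$; one then sets $\alpha_i=c_iD_{ii}$, and $|\alpha_i|=1$ is automatic because the diagonal of $DGD=G^{-1}$ forces $D_{ii}^2=(G^{-1})_{ii}=1/|c_i|^2$.

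Next I would feed LSAT into this. From $Y^*X=\diag(c_i)$ one gets the companion identity $X^*X = \diag(\overline{c_i})\,G^{-1}\,\diag(c_i)$, so the off-diagonal moduli and the triple products $\inner{x_i,x_j}\inner{x_j,x_k}\inner{x_k,x_i}$ of $X^*X$ agree, up to the positive factors $|c_ic_jc_k|^2$, with those of $G^{-1}$. LSAT says exactly that these triple products match those of $G$, and its degenerate case $i=j$ recovers the WAT $|\inner{x_i,x_j}|=|\inner{y_i,y_j}|$. Since rescaling $G^{-1}$ to unit diagonal $N:=\diag((G^{-1})_{ii}^{-1/2})\,G^{-1}\,\diag((G^{-1})_{ii}^{-1/2})$ alters neither the triple-product phases nor, given the WAT, the off-diagonal moduli, we find that $N$ and $G$ have equal off-diagonal moduli and equal triple-product phases. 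Equal moduli plus equal triangle holonomies means the edge phases $N_{ik}/G_{ik}$ form a coboundary, so $N=\Omega^*G\,\Omega$ for some diagonal unitary $\Omega$; equivalently $\overline{M}GM=G^{-1}$ with $M:=\Omega\,\diag((G^{-1})_{ii}^{1/2})$.

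The crux—and the step I expect to be hardest—is to upgrade $\Omega$ to something real, i.e.\ to prove the reality condition that $(G^{-1})_{ik}/G_{ik}$ is real on every edge. My trick is the following. The operation $A\mapsto \widehat A:=\diag((A^{-1})_{ii}^{-1/2})\,A^{-1}\,\diag((A^{-1})_{ii}^{-1/2})$ (invert, then rescale to unit diagonal) is an \emph{involution} on positive definite unit-diagonal matrices, and it is covariant under diagonal-unitary conjugation: $\widehat{\Omega^*A\Omega}=\Omega^*\widehat A\,\Omega$. Applying $\widehat{\,\cdot\,}$ to $\Omega^*G\Omega=N$ and using $\widehat G=N$, $\widehat N=G$ produces the dual relation $\Omega^*N\Omega=G$. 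Substituting the first into the second gives $(\Omega^2)^*G\,\Omega^2=G$, so $\omega_i^2=\omega_k^2$ whenever $G_{ik}\neq0$; hence $\omega_i^2$ is constant on each connected component of the graph of nonzero off-diagonals, which forces $\overline{\omega_i}\omega_k=\pm1$ and therefore $(G^{-1})_{ik}/G_{ik}=\overline{M_{ii}}M_{kk}\in\R$ on every edge. Fixing a base vertex per component and writing $\omega_i=s_i\omega_{i_0}$ with $s_i=\pm1$, I set the real scalars $D_{ii}:=s_i(G^{-1})_{ii}^{1/2}$; then $D_{ii}D_{kk}=(G^{-1})_{ik}/G_{ik}$ on edges, $D_{ii}^2=(G^{-1})_{ii}$, and off the edges both sides of $DGD=G^{-1}$ vanish by the WAT, so $DGD=G^{-1}$ holds entrywise and $U=YDY^*$ is the desired self-adjoint unitary.

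The genuinely novel ingredient is the involution identity $\widehat{\widehat A}=A$, which turns LSAT's \emph{single} conjugation relation $\Omega^*G\Omega=N$ into the \emph{pair} whose composition yields reality; without it one only controls triangle holonomies and cannot pin down the individual phases. The two places demanding care are the coboundary step (which uses that triangle holonomies control all cycles—safe here because the WAT forces $G^{-1}$ to share the zero pattern of $G$) and the case in which some eigenvectors are orthogonal, which makes the graph disconnected or degenerate; these configurations I would split off and treat directly, exactly as orthogonal eigenpairs are handled at the start of the proof of Theorem \ref{TheoremWAT}.
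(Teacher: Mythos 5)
Your argument is correct, and genuinely different from the paper's, in the main case where all the products $\inner{x_i,x_j}$ are nonzero. Both proofs start with the same cocycle-to-coboundary step (the paper's unimodular constants $\gamma_i = \inner{y_1,y_i}/\inner{x_1,x_i}$ play exactly the role of your diagonal unitary $\Omega$), but they diverge afterwards. The paper defines $R$ by $Rx_i = \gamma_i y_i$, checks that $R$ is unitary, proves that $R^2$ commutes with $T^*$, and then decomposes $T$ into irreducible pieces, on which the commutant is scalar, to get $R^2 = \omega I$ blockwise and normalize $U = \omega^{-1/2}R$. You instead reduce the lemma to solving $DGD = G^{-1}$ with $D$ real diagonal and extract the reality of $D$ from the involution identity $\widehat{\widehat{A}} = A$ and its covariance under diagonal-unitary conjugation. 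I verified these identities, the resulting relation $(\Omega^2)^* G \Omega^2 = G$, and your endgame ($D_{ii} = s_i (G^{-1})_{ii}^{1/2}$, $U = YDY^*$, $\alpha_i = c_i D_{ii}$); all of it is sound, and your relation $(\Omega^2)^*G\Omega^2 = G$ with $\omega_i^2$ constant on graph components is the precise linear-algebraic counterpart of the paper's ``$R^2$ commutes with $T^*$'' plus irreducibility. What your route buys is elementarity: no appeal to reducing subspaces or to the fact that irreducible operators have scalar commutants.

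The genuine gap is in the degenerate case, at exactly the step you wave through. When some $\inner{x_i,x_j} = 0$, the existence of $\Omega$ (your coboundary step) requires every cycle of the graph of nonvanishing inner products to be generated by triangles, and your parenthetical justification --- that the WAT makes $G^{-1}$ share the zero pattern of $G$ --- is a non sequitur: matching zero patterns disposes of the off-edge entries of $DGD = G^{-1}$, but contributes nothing to the coboundary step. Concretely, take $n=4$ with $G_{13} = G_{24} = 0$ and $G_{12}, G_{23}, G_{34}, G_{41} \neq 0$: the graph is a chordless four-cycle --- connected, so your ``disconnected or degenerate'' diagnosis also misses it --- containing no triangles at all, so the LSAT says nothing beyond the WAT there, and nothing in your argument forces the holonomy $(N_{12}/G_{12})(N_{23}/G_{23})(N_{34}/G_{34})(N_{41}/G_{41})$ to equal $1$. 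Your proposed repair, handling orthogonal eigenpairs ``as at the start of the proof of Theorem \ref{TheoremWAT},'' does not transfer: that argument is specific to $3 \times 3$ matrices (Schur triangularization plus the trace test \eqref{eq-TraceTest}), and for $n \geq 4$ orthogonality of two eigenvectors does not split off a direct summand. This degenerate configuration is precisely the technical point the paper handles by importing the inductive filling-in of the partially defined phase matrix $(\beta_{ij})$ from the proof of \cite[Thm.~2]{UECSMGC}, an argument that must use more than the abstract graph data. To complete your proof you need that argument, or a substitute exploiting the eigenvector structure; once $\Omega$ exists, the rest of your proof goes through verbatim.
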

	
	\begin{proof}[Proof of Lemma \ref{LemmaLSATUnitary}]
		The proof is similar to that of \cite[Thm.~2]{UECSMGC}.
		We first work under the assumption that $\inner{x_i,x_j} \ne 0$ for all 
		$i,j$.  Setting $k = i$ in \eqref{eq-LSAT} reveals that
		$\left| \inner{y_i,y_j} \right| =  \left| \inner{x_i,x_j} \right|$ so that the constants
		\begin{equation*}
			\gamma_i := \frac{\inner{y_1,y_i}}{\inner{x_1,x_i}}
		\end{equation*}
		each have unit modulus.  Since $T$ satisfies the LSAT, we next observe that 
		\begin{equation}\label{eq-alphaprop}
			\gamma_i \overline{\gamma_j} = \frac{\inner{y_1,y_i}\inner{y_j,y_1}}{\inner{x_1,x_i}\inner{x_j,x_1}} 
			= \frac{\inner{x_i,x_j}}{\inner{y_i,y_j}}.
		\end{equation}
		Let $R$ be the $n \times n$ matrix which satisfies $Rx_i = \gamma_i y_i$ for $1 \leq i \leq n$.
		By \eqref{eq-alphaprop}, we see that
		\begin{equation*}
			\inner{Rx_i,Rx_j} = \gamma_i \overline{\gamma_j} \inner{y_i,y_j} = \inner{x_i,x_j}
		\end{equation*}
		from which it follows that $R$ is unitary.
		We now briefly sketch how to modify this construction if $\inner{x_i,x_j} = 0$ for some pair $(i,j)$.  
		The details are largely technical and can be found in the proof of 
		\cite[Thm.~2]{UECSMGC}, \emph{mutatis mutandis}.  
	
		Consider the partially-defined, selfadjoint matrix $\left(\beta_{ij}\right)_{i,j=1}^n$ 
		whose (obviously unimodular) entries are given by 
		\begin{equation*}
			\beta_{ij} = \frac{\inner{y_i,y_j}}{\inner{x_i,x_j}}, 
		\end{equation*}
		for those $1 \le i,j \le n$ for which this expression is well-defined. 
		Since $T$ satisfies the LSAT \eqref{eq-LSAT}, it follows that $\beta_{ij}\beta_{jk}=\beta_{ik}$ holds
		whenever all of the quantities involved are well-defined.  It turns out that one can inductively fill in the undefined 
		entries of the matrix $\left(\beta_{ij}\right)_{i,j=1}^n$ so that each entry $\beta_{ij}$ is unimodular
		(i.e., $\beta_{ij} = \overline{\beta_{ji}}$) and such that the multiplicative property $\beta_{ij}\beta_{jk}=\beta_{ik}$ holds
		whenever $1 \leq i,j,k\leq n$.  One then constructs the unitary matrix $R$ by setting 
		$\gamma_i = \beta_{1i}$ and letting $Rx_i = \gamma_i y_i$ as before.  We refer the reader to
		the proof of \cite[Thm.~2]{UECSMGC} for further details.
		
		Now let $X = (x_1 | x_2| \cdots | x_n )$ denote the $n \times n$ matrix whose columns
		are the eigenvectors $x_1,x_2,\ldots,x_n$ of $T$ and let $D = \diag(\lambda_1,\lambda_2,\ldots,\lambda_n)$
		be the diagonal matrix whose entries are the corresponding eigenvalues of $T$.
		In particular, we note that the matrix
		$X$ diagonalizes $T$ in the sense that
		\begin{equation}\label{eq-Diagonalize}
			T = XDX^{-1}.
		\end{equation}
		We next remark that
		\begin{equation}\label{eq-UEtoop1}
			R^*T^*R = XD^*X^{-1},  
		\end{equation}
		since both matrices agree on the basis $x_1,x_2,\ldots,x_n$.
		Taking adjoints in \eqref{eq-Diagonalize} we find that
		\begin{equation*}
			T^* = (X^*)^{-1}D^*X^*,
		\end{equation*}
		from which it follows that the $i$th column of $(X^*)^{-1}$ is an eigenvector of $T^*$
		corresponding to the eigenvalue $\overline{ \lambda_i}$.  One can therefore check that
		\begin{equation}\label{eq-UEtoop2}
			RTR^* = (X^*)^{-1}DX^*
		\end{equation}
		by noting that $(X^*)^{-1}DX^*$ has the same eigenvectors as $T^*$ and
		evaluating both sides of \eqref{eq-UEtoop2} on the basis $y_1,y_2,\ldots,y_n$.  
		Taking adjoints in \eqref{eq-UEtoop2} yields 
		\begin{equation*}
			RT^*R^* = XD^*X^{-1}.
		\end{equation*} 
		Comparing the preceding with \eqref{eq-UEtoop1} we find that 
		\begin{equation*}
			RT^*R^* = R^*T^*R.
		\end{equation*}
		In other words, $T^*$ commutes with the unitary matrix $R^2$.  	
		
		If $T$ is irreducible (i.e., has no proper, nontrivial reducing subspaces), then 
		$R^2 = \omega I$ for some constant $\omega$ of unit modulus.  Letting $\alpha_i = \omega^{-1/2} \gamma_i$
		(either branch of the square root is acceptable),
		we find that the matrix $U = \omega^{-1/2} R$ is selfadjoint, unitary, and satisfies
		$Ux_i = \alpha_i y_i$ for $1 \leq i \leq n$.
		To conclude the proof in the general case, one simply applies the preceding reasoning on each 
		maximal proper reducing subspace of $T$ to obtain the desired matrix $U$.
	\end{proof}
	
	\begin{proof}[Proof of Theorem \ref{TheoremLSAT}]
		$(\Leftarrow)$ First assume that $T$ is unitarily equivalent to $QDQ^{-1}$ where $D$ diagonal and $Q$
		belongs to $\SU(k,n-k)$.  Since the condition \eqref{eq-LSAT} of the LSAT is invariant under unitary transformations, 
		we may assume that $T = QDQ^{-1}$.  Recalling that $A = I_k \oplus (-I_{n-k})$ is diagonal
		and using \eqref{eq-QAQA}, we have
		\begin{align*}
			T^* &= (Q^{-1})^*D^*Q^*\\
			&= (AQA)D^* (AQ^{-1}A) \\
			&= AQ(AD^* A)Q^{-1}A \\
			&= (AQ)D^* (AQ)^{-1}.
		\end{align*}
		Writing $Q = (q_1 |q_2| \cdots | q_n)$ in column-by-column format, we obtain normalized eigenvectors
		\begin{equation*}
			x_i = \frac{q_i}{\norm{q_i}}, \qquad y_i = Ax_i,
		\end{equation*}
		of $T$ and $T^*$, respectively.  Since $A$ is unitary it follows that $\inner{x_i,x_j} = \inner{y_i,y_j}$ 
		whence $T$ satisfies the Linear Strong Angle Test \eqref{eq-LSAT}.
		\medskip
		
		\noindent$(\Rightarrow)$  Suppose that $T$ satisfies the LSAT. 	
		By Lemma \ref{LemmaLSATUnitary}, there is a selfadjoint unitary matrix $U$ and unimodular
		constants $\alpha_1, \alpha_2, \ldots, \alpha_n$ such that $Ux_i = \alpha_i y_i$ for $1 \le i \le n$.  
		As in the proof of Lemma \ref{LemmaLSATUnitary}, let $X = (x_1 | x_2 | \cdots | x_n)$ and let 
		$D = \diag(\lambda_1, \lambda_2, \ldots, \lambda_n)$ so that 
		\begin{equation}\label{eq-TXDX}
			T = XDX^{-1}.  
		\end{equation}
		As in \eqref{eq-UEtoop2}, we have 
		\begin{equation}\label{eq-TXDX2}
			T = U(X^*)^{-1}DX^*U,
		\end{equation}
		as both sides agree on the basis $x_1,x_2,\ldots,x_n$ (recall that $(X^*)^{-1}DX^*$ has the same
		eigenvectors as $T^*$).  In light of \eqref{eq-TXDX} and \eqref{eq-TXDX2}, we conclude that
		the matrix $X^*UX$ commutes with $D$.  Since the diagonal entries of $D$ are distinct
		and $U$ is selfadjoint, we conclude that $X^*UX$ is a diagonal matrix having only real entries.
		 Without loss of generality, we may assume that the vectors $x_1,x_2,\ldots,x_n$ are ordered 
		 so that the first $k$ diagonal entries of $X^*UX$ are positive and the last $n-k$ are negative
		 (note that $X^*UX$ is invertible since both $U$ and $X$ are invertible).
	 	 
		Now let $w_i = \left|\delta_i\right|^{-\frac12} x_i$, where $\delta_i$ is the $i$th diagonal entry of $X^*UX$.  
		With $W = (w_1 | w_2|\cdots | w_n)$, we have
		\begin{equation}\label{eq-WUWB}
			W^*UW=A.
		\end{equation}
		Since $U$ is selfadjoint and unitary, we may appeal to both Sylvester's Law of Inertia \cite[Thm.~4.5.8]{HJ} 
		and the Spectral Theorem to find a unitary matrix $Z$ such that 
		\begin{equation}\label{eq-UZAZ}
			U=Z^*AZ.
		\end{equation}
		Plugging \eqref{eq-UZAZ} into \eqref{eq-WUWB} we find that
		\begin{equation}\label{eq-ZWAZWA}
			(ZW)^*A(ZW) = A,
		\end{equation}
		which tells us that the matrix $Q= ZW$ belongs to $\SU(k,n-k)$.
		Since the columns $w_i$ of $W$ are nonzero multiples of the corresponding columns
		$x_i$ of $X$, it follows that
		\begin{align*}
			T 
			&= WDW^{-1} \\
			&= Z^*\left[(ZW)D(ZW)^{-1}\right]Z \\
			&= Z^* (QDQ^{-1}) Z.
		\end{align*}
		Thus $T$ is unitarily equivalent to a matrix of the form $QDQ^{-1}$ where
		$Q$ belongs to $\SU(k,n-k)$.  This completes the proof of Theorem \ref{TheoremLSAT}.
	\end{proof}
	
	We can now use Theorem \ref{TheoremLSAT}  to construct matrices that satisfy the Weak 
	Angle Test but not the Strong Angle Test.  We begin by constructing a matrix $T$ that 
	satisfies the Linear Strong Angle Test.  From the theorem, we know that this can be done 
	by constructing a matrix $Q$ in $\SU(k,n-k)$ and setting $T = QDQ^{-1}$ for any diagonal
	matrix $D$ with distinct entries.  Putting $k=j$ in the LSAT shows that $T$ will satisfy the 
	WAT, but $T$ may satisfy the SAT as well.  
	
	Comparing \eqref{eq-SAT} and \eqref{eq-LSAT}, we can see that $T$ will satisfy the SAT if and only if
	\begin{equation}\label{eq-RealInners}
		\inner{x_i,x_j} \inner{x_j,x_k} \inner{x_k, x_i} \in \R
	\end{equation}
	for $1 \le i \le j \le k \le n$.  In practice, this condition is rarely satisfied.  We illustrate the
	 process with the following example.

	\begin{Example}\label{ExampleConstruct}
		To construct a matrix that satisfies the WAT but not the SAT, we first need to construct an element 
		of $\SU(k,n-k)$.  We will do this for $n=4$, as we know from Theorem \ref{TheoremWAT} that 
		examples of matrices that satisfy WAT but not SAT do not exist for smaller choices of $n$.  We will 
		use $k=2$, which turns out to be necessary when $n=4$ (see Theorem \ref{TheoremSimple}).  
		
		An element of $\SU(2,2)$ can be
		produced by applying an indefinite analogue of the Gram-Schmidt process
		to a collection of four elements of $\C^4$ \cite[Sec. 3.1]{MR2186302}.  Using this method on a
		 matrix with small random entries in $\mathbb{Z}[i]$ produced
		\begin{equation*}
			Q = 
			\begin{pmatrix}
				 1+\frac{i}{2} & 0 & -\frac{1}{2\sqrt{6}} (1-i) & \frac{i}{\sqrt{6}} \\
				 -\frac{i}{2} & 2 i & \frac{1}{2\sqrt{6}}(7+5i) &   -\frac{i}{\sqrt{6}} \\
				 -\frac{1}{2}(1-i) & 1-i & -\frac{1}{\sqrt{6}}(1+4 i) & -\sqrt{\frac{2}{3}} \\
				 0 & -i & -\sqrt{\frac{2}{3}}(1+i)  & \sqrt{\frac{2}{3}}
			\end{pmatrix}.
		\end{equation*}
		
		Let $D$ be the diagonal matrix with diagonal $(-1, 0, 1,2)$ and let $T = QDQ^{-1}$.  Explicitly, we have
		\begin{equation}\label{eq-JamesMatrix}
			T = \frac{1}{6}
			\begin{pmatrix}
				-10 & 4-6 i & -3-11 i & 2 i \\
				 4+6 i & -22 & -15+17 i & -12-2 i \\
				 3-11 i & 15+17 i & 28 & 2+6 i \\
				 2 i & 12-2 i & 2-6 i & 16
			\end{pmatrix}.
		\end{equation}
		By Theorem \ref{TheoremLSAT}, we know that $T$ passes the LSAT and hence the WAT.  
		On the other hand, if $q_i$ denotes the $i$th column of $Q$, then we have
		\begin{equation*}
			\inner{q_1,q_2}\inner{q_2,q_3}\inner{q_3,q_1} = \frac{1}{3}(100 - 8i) \not \in \R.
		\end{equation*}
		Hence $T$ does not satisfy the SAT and is therefore not UECSM.  Although there was no guarantee that the matrix $T$ 
		obtained in this manner would not satisfy the SAT, in practice this does not appear to occur frequently.
	\end{Example}
		
	\begin{Example}\label{ExampleLevon}
		In this example, we consider the matrix 
		\begin{equation}\label{eq-Levon}
			T=
			\begin{pmatrix}
				 5 & 0 & -1 & 3 \\
				 2 & 4 & 1 & 2 \\
				 2 & -2 & 6 & -2 \\
				 0 & -2 & 1 & 4
			\end{pmatrix},
		\end{equation}
		which was the first known example of a matrix which passes the Weak Angle Test \eqref{eq-WAT}
		yet fails to be UECSM \cite[Ex.~5]{UECSMGC}.  This matrix was originally obtained by a brute force search, 
		but now Theorem \ref{TheoremLSAT} puts it into a broader context  and explains \emph{why} matrices such 
		as \eqref{eq-Levon} exist.  Indeed, the computations carried out in
		\cite[Ex.~5]{UECSMGC} confirm that $T$ passes the WAT and the LSAT, but fails
		the SAT.  By following the proof of Theorem \ref{TheoremLSAT}, it is possible to explicitly compute a $Q$ in
		$\SU(2,2)$ and a diagonal matrix $D$ such that $T$ is unitarily equivalent to $QDQ^{-1}$.
	\end{Example}
	
\section{Contrasting dimensions three and four}\label{Section34}

	We conclude this note with some remarks concerning certain phenomena which
	distinguish dimension three  from dimensions four and above.
	In the following, we maintain the notation and conventions established in the preceding two sections.
	
	As we have seen, Theorem \ref{TheoremLSAT} provides a method for constructing
	matrices which pass the Weak Angle Test (WAT) and which \emph{may} fail to be UECSM.  
	On the other hand, Theorem \ref{TheoremWAT} asserts that  passing the WAT \emph{is} sufficient for a matrix to be UECSM
	if $n = 3$.  Therefore something peculiar must occur in dimension three
	which prevents the method of Theorem \ref{TheoremLSAT} from ever actually producing
	examples such as the matrices \eqref{eq-JamesMatrix} from Example \ref{ExampleConstruct} and
	 \eqref{eq-Levon} from Example \ref{ExampleLevon}.
	The following theorem helps explain this curious dichotomy.  
	
	\begin{Theorem}\label{TheoremSimple}
		If $T$ in $\M_n(\C)$ has distinct eigenvalues and is unitarily equivalent to a matrix of the form 
		$QDQ^{-1}$ where $D$ is diagonal and $Q$ belongs to $\SU(n-1,1)$,
		then $T$ is UECSM.
	\end{Theorem}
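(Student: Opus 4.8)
The plan is to show that $T$ passes the Strong Angle Test \eqref{eq-SAT}, which by \cite[Thm.~2]{UECSMGC} is equivalent to $T$ being UECSM. Since both the SAT and unitary equivalence are unitary invariants, I may assume outright that $T = QDQ^{-1}$, where $Q \in \SU(n-1,1)$ and $D = \diag(\lambda_1,\ldots,\lambda_n)$ has distinct diagonal entries. Writing $Q = (q_1 | q_2 | \cdots | q_n)$ column-by-column, the relation $Tq_i = QDQ^{-1}q_i = \lambda_i q_i$ shows that $x_i = q_i/\norm{q_i}$ is a unit eigenvector of $T$, and exactly as in the $(\Leftarrow)$ direction of Theorem \ref{TheoremLSAT} the matrix $T$ passes the LSAT \eqref{eq-LSAT}. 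Consequently, by the discussion preceding Example \ref{ExampleConstruct}, $T$ passes the SAT if and only if the reality condition \eqref{eq-RealInners} holds. Since each $x_i$ differs from $q_i$ only by the positive scalar $\norm{q_i}^{-1}$, these phases cancel in the triple product, so \eqref{eq-RealInners} is equivalent to
\[
	\inner{q_i,q_j}\inner{q_j,q_k}\inner{q_k,q_i} \in \R \qquad (1 \le i \le j \le k \le n).
\]

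The heart of the matter is the special structure forced by the signature $(n-1,1)$. Here $A = I_{n-1}\oplus(-I_1) = I - 2e_ne_n^*$ is a \emph{rank-one} perturbation of the identity, where $e_n$ denotes the last standard basis vector of $\C^n$. Setting $u = Q^*e_n$ and expanding the defining relation \eqref{eq-QAQA}, I would compute
\[
	Q^*Q = A + 2Q^*e_ne_n^*Q = I - 2e_ne_n^* + 2uu^* = I + 2(uu^* - e_ne_n^*).
\]
The upshot is that the off-diagonal part of the Gram matrix $Q^*Q$ is governed by the single vector $u$: since the $e_ne_n^*$ term affects only the $(n,n)$ entry, reading off the entries gives $\inner{q_i,q_j} = 2\,\overline{u_i}\,u_j$ for every pair $i \ne j$.

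With this factorization in hand the reality condition is immediate. For a triple of \emph{distinct} indices $i,j,k$ we obtain
\[
	\inner{q_i,q_j}\inner{q_j,q_k}\inner{q_k,q_i} = (2\overline{u_i}u_j)(2\overline{u_j}u_k)(2\overline{u_k}u_i) = 8\,|u_i|^2|u_j|^2|u_k|^2,
\]
which is real (indeed nonnegative); and any triple with a repeated index collapses to a quantity of the form $|\inner{q_i,q_j}|^2\norm{q_j}^2$, which is likewise real. Hence \eqref{eq-RealInners} holds, $T$ passes the SAT, and $T$ is therefore UECSM.

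I expect the only genuine content — and the natural place to go wrong — to be the rank-one reduction in the second paragraph: it is precisely the hypothesis $n-k=1$ that makes $A-I$ rank one and thereby forces the off-diagonal Gram entries to factor through the single vector $u$. For a general signature $(k,n-k)$ with $n-k\ge 2$ this factorization fails, the triple products need not be real, and indeed Example \ref{ExampleConstruct} exhibits an $\SU(2,2)$ matrix passing the WAT \eqref{eq-WAT} but not the SAT. This also clarifies the dimension-three dichotomy: when $n=3$ the only nontrivial signatures are $(2,1)$ and $(1,2)$, both of the form $\SU(n-1,1)$ and hence covered by the present theorem, so the construction of Theorem \ref{TheoremLSAT} can never manufacture a true counterexample there.
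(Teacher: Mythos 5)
Your proof is correct, and it takes a genuinely different route from the paper's. Both arguments start the same way (reduce to $T = QDQ^{-1}$ and invoke Theorem \ref{TheoremLSAT} to conclude that $T$ passes the LSAT), but the paper then re-enters the $(\Rightarrow)$ half of that theorem's proof: after using the Strong Angle Test to reduce to the case of real eigenvalues, it extracts from \eqref{eq-TXDX2} the identity $T = UT^*U$ with $U$ a selfadjoint unitary that is $*$-congruent to $A = \diag(1,1,\ldots,1,-1)$, diagonalizes $U$ via Sylvester's Law of Inertia, and then explicitly conjugates the resulting block form of $T$ into a complex symmetric matrix. You instead stay on the eigenvector side throughout: using only the easy $(\Leftarrow)$ direction of Theorem \ref{TheoremLSAT} and the paper's observation that, given the LSAT, the SAT is equivalent to the reality condition \eqref{eq-RealInners}, you exploit the fact that $A - I = -2e_ne_n^*$ has rank one, so that $Q^*AQ = A$ forces $Q^*Q = I - 2e_ne_n^* + 2uu^*$ with $u = Q^*e_n$; the off-diagonal Gram entries then factor through the single vector $u$, and every triple product collapses to $8|u_i|^2|u_j|^2|u_k|^2 \geq 0$. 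Your computations check out (including the phase-invariance under normalizing $q_i$ to $x_i$ and the degenerate triples with a repeated index). As for what each approach buys: the paper's proof is constructive --- it exhibits an explicit complex symmetric form, using \cite[Thm.~2]{UECSMGC} only for the harmless reduction to real eigenvalues --- whereas yours is shorter, bypasses Lemma \ref{LemmaLSATUnitary} and the Sylvester/spectral-theorem machinery entirely, and pinpoints exactly where the signature hypothesis enters: for signature $(k,n-k)$ with $n-k \geq 2$ the perturbation $A - I$ has rank at least two, the factorization of the Gram entries fails, and Example \ref{ExampleConstruct} shows the conclusion genuinely fails there. The price is that your argument leans on the full strength of the SAT characterization of UECSM from \cite[Thm.~2]{UECSMGC} for its final step, rather than producing the symmetric matrix itself.
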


	\begin{proof}
		Without loss of generality, we may assume that $T = QDQ^{-1}$
		where $D$ is diagonal and $Q$ belongs to $\SU(n-1,1)$.  We may also assume 
		that $D$ has real entries, as it follows from the Strong Angle Test that $T$ being 
		UECSM is independent of the actual eigen\emph{values} of $T$.  By Theorem 
		\ref{TheoremLSAT}, it follows that $T$ satisfies the 
		Linear Strong Angle Test \eqref{eq-LSAT}. 
		Returning to the proof of Theorem \ref{TheoremLSAT}, we note that \eqref{eq-TXDX2} asserts that
		\begin{equation*}
			T = U(X^*)^{-1}DX^*U
		\end{equation*}
		where $U$ is a selfadjoint unitary matrix and $D= D^*$ by assumption.
		However, this simply means that 
		\begin{equation}\label{eq-TUTU}
			T = UT^*U.
		\end{equation}
		As shown in the proof of Theorem \ref{TheoremLSAT},
		the fact that $Q$ belongs to $\SU(n-1,1)$ implies that $U$ is unitarily equivalent to
		$A = \diag(1,1,\ldots,1,-1)$.  Indeed, plugging $Q = ZW$ into \eqref{eq-ZWAZWA}
		and using \eqref{eq-UZAZ}, we find that $W^*UW = A$ (i.e., $U$ is $*$-congruent to $A$).  
		The desired result follows upon appealing to Sylvester's Law of Inertia \cite[Thm.~4.5.8]{HJ}.
		
		After performing a unitary change of coordinates in \eqref{eq-TUTU},
		we may assume that $U = \diag(1,1,\ldots,1,-1)$ so that $T$ has the form
		\begin{equation*}
			\begin{pmatrix}
				T_{1,1} & T_{1,2}\\
				-T_{1,2}^* & T_{2,2}
			\end{pmatrix}
		\end{equation*}
		where $T_{1,1}$ is $(n-1)\times (n-1)$ and selfadjoint and $T_{2,2}$ is $1\times 1$ and real.
		Conjugating $T$ by an appropriate block-diagonal unitary matrix we may further assume that 
		$T_{1,1}$ is diagonal.  Conjugating again by a diagonal unitary matrix, we may also
		arrange for the $(n-1)\times 1$ matrix $T_{1,2}$ to be purely imaginary.  In other words,
		$T$ is UECSM.
	\end{proof}

	\begin{Corollary}\label{Corollary3x3}
		If $T$ is $3 \times 3$ and satisfies the Linear Strong Angle Test \eqref{eq-LSAT}, then $T$ is UECSM.
	\end{Corollary}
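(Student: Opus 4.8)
The plan is to combine Theorem \ref{TheoremLSAT} with Theorem \ref{TheoremSimple} and to exploit the fact that in dimension three the possible inertias are severely restricted. Since the Linear Strong Angle Test is phrased in terms of the eigenvectors $x_i$ and $y_i$, I assume throughout that $T$ has distinct eigenvalues. Applying Theorem \ref{TheoremLSAT} with $n=3$, I may assume that $T = QDQ^{-1}$ where $D$ is diagonal and $Q$ belongs to $\SU(k,3-k)$ for some $k \in \{1,2,3\}$. Moreover, because the Strong Angle Test \eqref{eq-SAT} involves only the eigenvectors and not the eigenvalues, the property of being UECSM is independent of the diagonal entries of $D$; hence I may take $D$ to be real, exactly as in the opening of the proof of Theorem \ref{TheoremSimple}.

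The key observation is that for $n=3$ every indefinite signature $(k,3-k)$ satisfies $\min(k,3-k) = 1$, so it is of the form $\SU(n-1,1)$ up to reversing the inertia. I would organize the argument by the value of $k$. When $k=2$ the group is $\SU(2,1) = \SU(n-1,1)$ and Theorem \ref{TheoremSimple} applies verbatim. When $k=3$ the matrix $Q$ lies in $\SU(3,0) = \SU(3)$ and is therefore unitary, so $T = QDQ^*$ is normal and hence trivially UECSM (alternatively, the selfadjoint unitary $U$ produced in the proof of Theorem \ref{TheoremLSAT} then has inertia $(3,0)$, forcing $U=I$ and $T = T^*$).

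The only remaining case is $k=1$, and here I would reduce to the case $k=2$ rather than re-prove anything. Following the proof of Theorem \ref{TheoremLSAT}, there is a selfadjoint unitary $U$ with $T = UT^*U$ (using $D = D^*$), and $U$ has inertia $(1,2)$. The matrix $-U$ is then a selfadjoint unitary with inertia $(2,1) = (n-1,1)$, and since $(-U)T^*(-U) = UT^*U = T$, the relation $T = (-U)T^*(-U)$ places us in precisely the situation analyzed in the proof of Theorem \ref{TheoremSimple}. Running that block-decomposition argument with $-U$ in place of $U$ --- diagonalizing the selfadjoint $(n-1)\times(n-1)$ corner and then making the off-diagonal block purely imaginary --- exhibits $T$ as complex symmetric.

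I do not anticipate a serious obstacle, since the required computations are all subsumed by the two theorems already established. The one point requiring care is the case $k=1$, where it is essential to observe that replacing $U$ by $-U$ (equivalently, passing from the form $A$ to $-A$, which has the reversed inertia) converts signature $(1,2)$ into the signature $(n-1,1)$ handled by Theorem \ref{TheoremSimple}. Conceptually, this is precisely where dimension three is special: for $n \ge 4$ the indefinite signature $(2,2)$ has $\min(k,n-k) = 2$ and cannot be reduced to $(n-1,1)$ in this fashion, which is why the analogue of the corollary fails and counterexamples such as \eqref{eq-JamesMatrix} and \eqref{eq-Levon} can exist.
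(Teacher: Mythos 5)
Your proposal is correct and follows essentially the same route as the paper: apply Theorem \ref{TheoremLSAT}, dispose of the definite case $\SU(3,0)$ by noting that $Q$ is then unitary (so $T$ is unitarily equivalent to the diagonal matrix $D$), and invoke Theorem \ref{TheoremSimple} for the indefinite signature. The only difference is that where the paper dismisses $\SU(1,2)$ in a parenthetical as identical to $\SU(2,1)$, you spell out that reduction explicitly via the sign flip $U \mapsto -U$ inside the proof of Theorem \ref{TheoremSimple}; this is the same inertia-reversal symmetry, just made concrete.
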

	
	\begin{proof}
		By Theorem \ref{TheoremLSAT}, $T$ is unitarily equivalent to a matrix of the form $QDQ^{-1}$ where
		$Q$ belongs to either $\SU(3,0)$ or $\SU(2,1)$ (the cases $\SU(1,2)$ and $\SU(0,3)$ being identical
		to these first two).  If $Q$ belongs to $\SU(3,0)$, then $Q$ is unitary whence $T$ is unitarily equivalent to
		the diagonal matrix $D$.  On the other hand, if $Q$ belongs to $\SU(2,1)$, then we may appeal to
		Theorem \ref{TheoremSimple} to conclude that $T$ is UECSM.
	\end{proof}

%


\bibliography{UECSMLD}

\end{document}